\newtheorem{definition}{Definition}[section]
\newtheorem{teo}{Theorem}[section]
\def \pmatrix{ \left( \begin{array} }
\def \endpmatrix{ \end{array} \right) }
\def\Cl{{\mathcal{C} \ell}_{0,n}}
\def\R{\mathbb{R}}
\def\C{\mathbb{C}}
\def\H{\mathbb{H}}
\def\N{\mathbb{N}}
\def\A{\mathcal{A}}
\def\xv{\underline{x}}
\def\P{\mathcal{P}}
\def\Pk{\P_{k}}
\def\Pkn{\Pk^n}
\def\xv{\underline{x}}
\begin{document}

\begin{center}%
\vskip 1.5em%
{\Large On generalized Vietoris' number sequences - origins, properties and applications}%
\vskip 1.5em%
{\large
\lineskip .5em%
\begin{tabular}[t]{ccc}%
I. Ca\c{c}\~{a}o$^a$ & M.~I.~Falc\~{a}o$^b$ & H. R. Malonek$^a$\\
{\small	isabel.cacao@ua.pt}&\small	mif@math.uminho.pt& \small hrmalon@ua.pt
\end{tabular}\par}%

\vskip 0.5cm
{\small 
\vskip 0.5cm

$^a$ CIDMA  and Department of Mathematics,	University of Aveiro,  Aveiro, Portugal
	
$^b$ CMAT and Department of Mathematics and Applications, University of Minho,\break Campus de Gualtar, Braga, Portugal}
\end{center}%
\vskip 0.5cm
\begin{abstract}
Ruscheweyh and Salinas showed in 2004 the relationship of a celebrated theorem of Vietoris (1958) about the positivity of certain sine and cosine sums with the function theoretic concept of stable holomorphic functions in the unit disc. The present paper shows that the coefficient sequence in Vietoris' theorem is identical with the number sequence that characterizes generalized Appell sequences of homogeneous Clifford holomorphic polynomials in $\R^3.$ The paper studies one-parameter generalizations of Vietoris' number sequence, their properties as well as their role in the framework of Hypercomplex Function Theory.
\end{abstract}

\noindent {\slshape Keywords:}
Vietoris' number sequence; generating functions; combinatorial \break identities; monogenic Appell polynomials

\section{Introduction}
\subsection{Vietoris' theorem and its coefficient's sequence }

In \cite{viet58} Vietoris proved the positivity of two trigonometric sums with a pairwise coefficient's sequence defined by coefficients with even index of the form
\begin{equation}\label{ck viet}
a_{2m} = \frac{1}{2^{2m}}\binom{2m}{m}\;\; m \geq 0.
\end{equation}
We refer to Vietoris' theorem as it is given by Askey and Steinig in \cite{asst74}:

\begin{teo}[L. Vietoris]\label{TheoViet}
If $a_{2m}$ ($m \geq 0$) are given by (\ref{ck viet}) and the following adjacent coefficients are supposed to be the same, i.e. $a_{2m+1}:=a_{2m}$ then
\begin{eqnarray}\label{TheoViet2}
\sigma_n(x) \hspace{-0.2cm}&=&\hspace{-0.2cm} \sum_{k=1}^{n} a_k\sin kx > 0,\;\;\; 0 < x < \pi, \\
\tau_n(x) \hspace{-0.2cm}&=&\hspace{-0.2cm} \sum_{k=0}^{n} a_k\cos kx > 0,\;\;\; 0 < x < \pi. \label{TheoViet3}
\end{eqnarray}
\end{teo}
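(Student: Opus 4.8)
The plan is to exploit the defining pairing $a_{2m+1}=a_{2m}$ to collapse both sums into the real and imaginary parts of a single complex expression, and then to reduce the two positivity statements to one property of the partial sums of the generating function of the even coefficients. Writing $\omega = e^{ix}$ and treating first an odd truncation index $N=2n+1$, the pairing yields the factorization
\[
\sum_{k=0}^{2n+1} a_k e^{ikx} = \sum_{m=0}^{n} a_{2m}\bigl(e^{2imx}+e^{i(2m+1)x}\bigr) = (1+e^{ix})\,Q_n(e^{2ix}),
\]
where $Q_n(w)=\sum_{m=0}^{n} a_{2m}\,w^m$ is the $n$-th partial sum of $\sum_{m\ge0}\tfrac{1}{4^m}\binom{2m}{m}w^m=(1-w)^{-1/2}$. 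Since $\tau_{2n+1}$ and $\sigma_{2n+1}$ are exactly the real and imaginary parts of the left-hand side (the $k=0$ term does not affect the sine sum), this single identity carries all the information for both inequalities at odd order; the even orders $\tau_{2n},\sigma_{2n}$ are then recovered as corrections involving one additional paired term.

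Next I would pass to polar form. Using $1+e^{ix}=2\cos(x/2)e^{ix/2}$, with $\cos(x/2)>0$ on $(0,\pi)$, and writing $Q_n(e^{2ix})=\rho_n(x)\,e^{i\phi_n(x)}$, the factorization becomes $2\cos(x/2)\,\rho_n(x)\,e^{i(x/2+\phi_n(x))}$. Hence both $\tau_{2n+1}>0$ and $\sigma_{2n+1}>0$ reduce to the single geometric assertion that the argument $x/2+\phi_n(x)$ stays inside $(0,\pi/2)$ for $0<x<\pi$, i.e. that the truncated series $Q_n(e^{2ix})$ never rotates the factor $1+e^{ix}$ out of the first quadrant. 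This is precisely the stability mechanism of Ruscheweyh and Salinas recalled in the introduction, and it is where the arithmetic of the coefficients enters: the exact ratio $a_{2k}/a_{2k-1}=(2k-1)/(2k)$, equivalently $2k\,a_{2k}=(2k-1)\,a_{2k-1}$, is what forces the required argument bound.

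To make the argument bound effective I would argue by induction on $n$ using summation by parts, but not the naive transform: since the conjugate Dirichlet kernels $\sum\sin kx$ and $\sum\cos kx$ change sign on $(0,\pi)$, a single Abel step against the monotone sequence $(a_k)$ fails. The remedy is a second summation by parts, which produces the nonnegative Fej\'er kernel, the relevant quantity then being a weighted second difference of the $a_k$; the pairing makes the sequence flat-then-decreasing, so these second differences are governed exactly by the equality $2k\,a_{2k}=(2k-1)\,a_{2k-1}$. I expect the main obstacle to be this sign-change phenomenon: one must combine the monotonicity, the extremal ratio condition, and the nonnegativity of the Fej\'er kernel so that all remainder terms absorb, while separately checking the endpoint behaviour as $x\to0^+$ and $x\to\pi^-$ and handling the two parities of the truncation order. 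A direct verification of a small base case then closes the induction.
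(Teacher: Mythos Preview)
The paper does not prove this theorem at all. Theorem~1.1 is quoted in the introduction as a classical result of Vietoris, in the formulation of Askey and Steinig, and is used only as motivation for the sequence $(c_k)_{k\ge0}$; no argument for the inequalities \eqref{TheoViet2}--\eqref{TheoViet3} is given or needed anywhere in the paper. So there is nothing to compare your proposal against.

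As for the proposal itself: the factorization $(1+e^{ix})Q_n(e^{2ix})$ and the polar reduction via $1+e^{ix}=2\cos(x/2)e^{ix/2}$ are correct and standard, and your identification of the extremal ratio $2k\,a_{2k}=(2k-1)\,a_{2k-1}$ as the critical ingredient is on target. But what you have written is an outline, not a proof. The hard step---controlling the argument of $Q_n(e^{2ix})$, or equivalently pushing through the double Abel transform so that all remainders absorb---is precisely the content of Vietoris' and Askey--Steinig's work, and you have only named it, not carried it out. Phrases such as ``I expect the main obstacle to be'' and ``a direct verification of a small base case then closes the induction'' leave the substantive analytic work undone. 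If you intend an actual proof rather than a strategy sketch, you must exhibit the Fej\'er-kernel estimate and the endpoint analysis explicitly, or else cite the relevant stability theorem of Ruscheweyh--Salinas and verify its hypotheses.
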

Those coefficients are central binomial coefficients with particular weight $\frac{1}{2^{2m}}$. Their similarity to Catalan numbers $\mathcal{C}_m$ defined by the central binomial coefficients but with different weight by $\mathcal{C}_m = \frac{1}{{m+1}}\binom{2m}{m}$ is obvious. Like for Catalan's numbers, the appearance of (\ref{ck viet}) in a vast number of combinatorial identities, Special Functions or integral representations reveals some particular role of this number sequence.

Using the arsenal of real analysis methods in positivity theory the authors of \cite{asst74} showed the embedding of Vietoris' results in general problems for Jacobi polynomials and extensions of inequalities considered by Fej\'{e}r\footnote{The statement of Theorem 1.1. is also true if $a_0\geq a_1\geq \dots\geq a_n > 0$ and $(2k)a_{2k}\leq (2k-1)a_{2k-1},\;k\geq 1,$ but Theorem 1.1., being a special case of this second variant, implies also this second variant, which means that both variants are equivalent, cf. \cite{asst74}.}. Moreover, it is well known that in real analysis trigonometric sums are relevant in Fourier analysis in general.  Number theory and the theory of univalent functions are examples of other fields where they play an important role. For more details from the real analysis point of view we refer to Askey's \cite{ask94}. On page 5 of this book the coefficients $a_k$ are introduced in the form $a_{2k}= a_{2k+1}= (\frac{1}{2})_k/k!, k\geq 0, $ where $(\cdot)_k$ is the raising factorial in the classical form of the Pochhammer symbol (see subsection 2.2).
Thirty years after the publication of \cite{asst74}, Ruscheweyh and Salinas showed in \cite{rusa04} the relevance of the celebrated result of Vietoris for a \textit{complex function theoretic result} on the stability of the function $f(z)=\sqrt{(1+z)/(1-z)}$ in the context of subordination of analytic functions in the unit disc $\mathbb{D}.$

The intention of this paper is to reveal some facts about the role of the sequence (\ref{ck viet}) and its generalization in the context of \textit{Hypercomplex Function Theory} (HFT) i.e. the theory of functions in higher dimensional Euclidean spaces which uses instruments of non-commutative Clifford algebras, cf. \cite{desosu92}.

For this purpose we consider the following sequence $(c_k)_{k\ge 0}$ with
\begin{equation}\label{equ1a}
c_k = \frac{1}{2^k}\binom{k}{\lfloor\tfrac{k}{2}\rfloor}, \;\; k\geq 0.
\end{equation}
By simple calculation we obtain directly from (\ref{equ1a}) that
\begin{equation*}
    c_{2m-1} = \frac{1}{2^{2m-1}}\binom{2m-1}{m-1} = \frac{1}{2^{2m-1}}\frac{(2m-1)!2m}{m!(m-1)! 2m} = \frac{1}{2^{2m}}\binom{2m}{m}=c_{2m}, \;\; m \geq 0.
\end{equation*}
It is evident that this sequence is exactly the sequence of the coefficients $a_k$ in Vietoris' \textit{sine sum} (\ref{TheoViet2}) in \textit{closed form}.
The difference to the \textit{cosine sum} coefficients in (\ref{TheoViet3}), starting from $k=0,$ consist only in the inclusion of $a_0=1$ at the beginning and the corresponding shifts of the indices. This is the reason why we call (\ref{equ1a}) the \textit{Vietoris' number sequence.} Its first elements are
\begin{equation}\label{Vietsequ}
    c_0=1;\;c_1=c_2=\frac{1}{2};\;c_3=c_4=\frac{3}{8};\;c_5=c_6=\frac{5}{16}; \dots
\end{equation}
As we will see, Vietoris' sequence plays a remarkable role in the theory of generalized sequences of Appell polynomials in HFT where generalized holomorphic functions defined in $\R^{n+1}$ with values in a non-commutative Clifford algebra $\Cl$ are considered. Whereas Vietoris' sequence corresponds to the special hypercomplex case $n=2$ we will show that also a natural generalization of (\ref{equ1a}) to the case $\R^{n+1}, n > 2,$ exists.

\subsection{Hamilton's quaternions come into the play}
 The main intention of this paper is to show how generalized Vietoris' number sequences appear naturally in HFT. Far from being exhaustive in our explanation, it seemed to be worthwhile for us to show the value of HFT even on the the level of certain number relations.

Anticipating the result of the general case, we would like to illustrate the surprising appearance of (\ref{Vietsequ}) in a relation between the generators of a non-commutative Clifford algebra $\Cl$ for $n=2.$ Therefore we use Hamilton's well known non-commutative algebra $\H$ of quaternions (cf. \cite{guehasp08})
\begin{equation*}
 q = x_0 + x_1 \mathbf{i} +x_2 \mathbf{j} + x_3\mathbf{k},\;\;\mbox{where}\;\;\mathbf{i}^2 =\mathbf{j}^2 = \mathbf{k}^2=\mathbf{i}\mathbf{j}\mathbf{k}= -1.
\end{equation*}
Due to non-commutativity the formal expansion of the binomial
$(\mathbf{i}+\mathbf{j})^k,\;k \geq 0,$
will not directly lead to Pascal's triangle, as the case $k=3$ shows:
\begin{equation}\label{cubicexp}
  (\mathbf{i}+\mathbf{j})^3= \mathbf{i}^3+ (\mathbf{i}\mathbf{i} \mathbf{j}+ \mathbf{i}\mathbf{j}\mathbf{i} + \mathbf{j}\mathbf{i}\mathbf{i}) + (\mathbf{i}\mathbf{j} \mathbf{j}+ \mathbf{j}\mathbf{i}\mathbf{j} + \mathbf{j}\mathbf{j}\mathbf{i}) + \mathbf{j}^3
\end{equation}
But that happens if we try to embed the non-commutative multiplication into the concept of a $k-nary$ symmetric (or permutative) operation. Therefore let $a_i$ stay for one of the generators $\mathbf{i}$ or $\mathbf{j}$ and write the quaternionic $k$-fold product of $k-s$ generators $\mathbf{i}$ and $s$ generators $\mathbf{j},$ respectively, in the general form of a symmetric $``\times"$ product (\cite{ma90b}), i.e.
\begin{equation}\label{symm conven}
    \mathbf{i}^{k-s}\times\mathbf{j}^{s}:=\frac{1}{k!}\sum_{\pi(i_1, \ldots ,i_n)}{} a_{i_1}a_{i_2} \cdots a_{i_k}
\end{equation}
where the sum runs over {\bf {\em all}} permutations of all
$(i_1, \ldots ,i_n)$.
Then, by taking into account the repeated use of $\mathbf{i}$ and $\mathbf{j}$ on the right hand side of (\ref{symm conven}), we can write
\begin{equation}\label{symm conven1}
    \mathbf{i}^{k-s}\times\mathbf{j}^{s}= \frac{(k-s)!s!}{ k!}\hspace{-0.2cm}\sum_{\pi(i_1, \ldots ,i_n)}{} a_{i_1}a_{i_2} \cdots a_{i_k}= \left[\binom{k}{s}\right]^{-1}\hspace{-0.2cm}\sum_{\pi(i_1, \ldots ,i_n)}{} a_{i_1}a_{i_2} \cdots a_{i_k}
\end{equation}
where now the sum runs only over {\bf {\em all distinguished}} permutations of all
$(i_1, \ldots ,i_n)$.
Applying, for example, the convention (\ref{symm conven}) to (\ref{cubicexp}) we obtain now for $k=3$ the expansion written with binomial coefficients in the form
\begin{equation*}\label{cubicexp_rev}
  (\mathbf{i}+\mathbf{j})^3= \binom{3}{0}\mathbf{i}^3+ \binom{3}{1}\mathbf{i}^{2}\times\mathbf{j} + \binom{3}{2} \mathbf{i}\times\mathbf{j}^{2} + \binom{3}{3}\mathbf{j}^3.
\end{equation*}
Analogously, the expansion of $(\mathbf{i}+\mathbf{j})^k$ for any $k\geq 0$ follows the rules of the ordinary binomial expansion in an obvious way\footnote{An obvious generalization of (\ref{symm conven}) to the case of more than two generators used in the general case of $\Cl$ for $n\geq2$ leads to a polynomial formula and will be introduced in Section 2. }.

Let $(A_k)_{k\geq 0}$ be the sequence defined by
\begin{equation}\label{pattern}
    A_k=(-1)^k\left[\sum_{s=0}^{k}\binom{k}{s}(\mathbf{i}^{k-s} \times\mathbf{j}^{s})^2 \right]^{-1}.
\end{equation}
Its first elements are
\begin{align*}
A_0=&1\\
A_1=&(-1)^1\left[\mathbf{i}^2+ \mathbf{j}^2\right]^{-1}=\frac{1}{2}\\
A_2=&(-1)^2\left[(\mathbf{i}^2)^2+ \binom{2}{1}(\mathbf{i} \times \mathbf{j})^2 + (\mathbf{j}^2)^2\right]^{-1}=\frac{1}{2}\\
A_3=&(-1)^3\left[(\mathbf{i}^3)^2+ \binom{3}{1}(\mathbf{i}^2 \times\mathbf{j})^2 + \binom{3}{2}(\mathbf{i} \times \mathbf{j}^2)^2 +  (\mathbf{j}^3)^2 \right]^{-1}=\frac{3}{8}\\
A_4=&(-1)^4\left[(\mathbf{i}^4)^2+ \binom{4}{1}(\mathbf{i}^3 \times\mathbf{j})^2 + \binom{4}{2}(\mathbf{i}^2 \times \mathbf{j}^2)^2 + \binom{4}{3}(\mathbf{i} \times \mathbf{j}^3)^2 +(\mathbf{j}^4)^2 \right]^{-1}=\frac{3}{8}.
\end{align*}
Notice that for $k\geq2$ the influence of the non-commutativity is evident since, for example, $\mathbf{i} \times \mathbf{j}=\frac{1!1!}{2!}(\mathbf{i}\mathbf{j} + \mathbf{i}\mathbf{j})=0$ or
$\mathbf{i}^{2}\times\mathbf{j}=\binom{3}{1}^{-1}(\mathbf{i}\mathbf{i} \mathbf{j}+ \mathbf{i}\mathbf{j}\mathbf{i} + \mathbf{j}\mathbf{i}\mathbf{i})=-\frac{1}{3}\mathbf{j}$ and $\mathbf{i}\times\mathbf{j}^2=\binom{3}{1}^{-1}(\mathbf{i}\mathbf{j} \mathbf{j}+ \mathbf{j}\mathbf{i}\mathbf{j} + \mathbf{j}\mathbf{j}\mathbf{i})=-\frac{1}{3}\mathbf{i}.$
As the example for the first values of $k$ shows, the  $A_k$ obtained by formula (\ref{pattern}) coincide exactly, including their pairwise appearance, with those of Vietoris' sequence in (\ref{Vietsequ}). The proof of this fact as special case of the formula for generalized Vietoris's number sequences obtained by HFT methods will be given in Section 4.

\section{Basic concepts of Hypercomplex Function Theory}
\subsection{Clifford holomorphic functions and multidimensional polynomial sequences}
Obviously, it cannot be our aim to include here all proofs of facts from HFT that we will mention. What concerns basic facts form HFT, we refer e. g. to \cite{desosu92,guehasp08}, for approaches to multidimensional polynomial sequences by methods of HFT see e. g. \cite{bogue10,ma90b}.

 Let $\{e_1,e_2,\ldots,e_n\}$ be an orthonormal basis of the Euclidean vector space $\R^{n}$ with a non-commutative product according to the multiplication rules
$$e_ke_l+e_le_k=-2\delta_{kl},\quad k,l=1,\ldots,n,$$
where $\delta_{kl}$ is the Kronecker symbol. The set $\{e_A:A\subseteq \{ 1,\ldots,n\}\}$ with
$e_A=e_{h_1}e_{h_2}\cdots e_{h_r}$, $ 1\leq h_1 < \cdots < h_r\le n,\, e_{\emptyset}=e_0=1,$
is a basis of the $2^n$-dimensional Clifford algebra $\Cl$ over $\mathbb{R}.$ Let $\mathbb{R}^{n+1}$ be embedded in $\Cl$ by identifying $(x_0,x_1,\ldots,x_n)\in\mathbb{R}^{n+1}$ with
\begin{gather*}
x=x_0+\xv\in\mathcal{A}_n:=\hbox{span}_{\R}\{1,e_1,\ldots,e_n\}\subset\Cl.
\end{gather*}
Here, $x_0=Sc(x)$ and $\xv= V(x)=x_1e_1+\cdots+x_ne_n$ are the scalar and vector parts of the paravector $x \in\mathcal{A}_n.$ The conjugate of $x$ is given by $\bar{x}=x_0-\xv$
and its norm by $|x|=(x\bar{x})^{\frac{1}{2}}=(x_0^2+x_1^2+\cdots+x_n^2)^{\frac{1}{2}}.$
Of course, $\Cl$ for $n=1$ is the algebra of complex numbers $\C$ and the case $n=2,$ already mentioned in the previous section, corresponds to $\H,$ the algebra of Hamilton's quaternions generated by $e_1=\mathbf{i}, e_2=\mathbf{j}$ with $\mathbf{k}:=e_1e_2.$

Needless to mention that $\Cl$-valued functions defined in some open subset $\Omega\subset \mathbb{R}^{n+1}$ are of the form $f(z)=\sum_A f_A(x)e_A$ with real valued $f_A(x).$

The relationship of HFT to complex function theory can also easily be illustrated by the generalization of the complex Wirtinger derivatives in form of generalized Cauchy-Riemann operators in $\mathbb{R}^{n+1},\ n\ge 1,$
\begin{equation*}
\overline{\partial}:=\frac{1}{2}(\partial_0+\partial_{\xv})\;\;\mbox{and its conjugate}\;\;{\partial}:=\frac{1}{2}(\partial_0-\partial_{\xv})
\end{equation*}
with
\begin{equation*}
\partial_0:=\frac{\partial}{\partial x_0} \quad \mbox{and}\quad \partial_{\xv}:=e_1\frac{\partial}{\partial x_1}+\cdots+e_n\frac{\partial}{\partial x_n}.
\end{equation*}

$\mathcal{C}^1$-functions $f$ in the kernel of $\overline{\partial},$ i.e. with $\overline{\partial}f=0$ (resp. $f\overline{\partial}=0$) are called {\it left Clifford holomorphic} (resp. {\it right Clifford holomorphic}), cf. \cite{guehasp08}, or \textit{left resp. right monogenic} \cite{desosu92}. We suppose that $f$ is hypercomplex-differentiable in $\Omega$ in the sense of \cite{guema99}, that is, it has a uniquely defined areolar derivative $f'$ in each point of $\Omega$. Then, $f$ is real-differentiable and $f'$ can be expressed by the conjugate generalized Cauchy-Riemann operator as $f' =\partial f$. Since a hypercomplex differentiable function belongs also to the kernel of $\overline{\partial}$, one has $f'=\partial_0 f= - \partial_{\xv}f$ like in the complex case.

The basis for a suitable analog to power series in HFT by using alternatively several hypercomplex variables has been developed in \cite{ma90b}. The starting point is a second hypercomplex structure of $\R^{n+1}$ different from that given by $\mathcal{A}_n$ which relies on the isomorphism
$$
\R^{n+1}\cong{{\cal{H}}^n}=\{\vec{z}: z_k=x_k -x_0e_{k}; x_0,x_k\in \R, \;k=1,\ldots,n\},
$$
so that $\Cl $-valued functions are considered as mappings
\begin{eqnarray*}
f: \Omega \subset \R^{n+1}\cong{{\cal{H}}^n} &\longmapsto & {\mathcal C\ell}_{0,n}.
\end{eqnarray*}
Furthermore, the general version of the $``\times"$-product (\ref{symm conven}) used in the introduction, is defined by
\begin{definition}
Let $V_{+,\cdot}$ be a commutative or non-commutative ring, $a_k \in V\;
(k=1, \ldots ,n)$. The $``\times"$-product is defined by
\begin{equation}\label{crossprod}
a_1 \times a_2 \times \cdots \times a_n = \frac{1}{n!}
    \sum_{\pi(i_1, \ldots ,i_n)}{} a_{i_1}a_{i_2} \cdots a_{i_n}
\end{equation}
where the sum runs over {\bf {\em all}} permutations of all
$(i_1, \ldots ,i_n)$.
\end{definition}
Moreover, if the factor $a_j$ occurs ${\mu}_j$-times in
(\ref{crossprod}), we briefly write
\begin{equation*}\label{conv geral}
a_1 \times a_2 \times \cdots \times a_n = {a_1}^{{\mu}_1} \times {a_2}^{{\mu}_2} \times \cdots \times {a_n}^{{\mu}_n}
\end{equation*}
and set parentheses if the powers are understood in the ordinary way.
In analogy to (\ref{symm conven1}) the fact that
\begin{eqnarray*}
{a_1}^{\mu_1} \times {a_2}^{\mu_2} \times \cdots \times {a_n}^{\mu_n}
 = \frac{{\mu}!}{|{\mu}|!}
    \sum_{\pi(i_1, \ldots ,i_{|\mu|})}{} a_{i_1}a_{i_2} \cdots a_{i_{|{\mu}|}}
\end{eqnarray*}
where the sum runs over {\bf {\em all distinguished}} permutations,
leads to the following polynomial formula in terms of a multi-index $\mu=(\mu_1, \ldots \mu_n)$,
\begin{eqnarray*}
  (a_1 + a_2 + \cdots + a_n)^k  = \sum_{|{\mu}|=k}^{}\binom{k}{\mu}{\vec{a}}^{\mu},
\end{eqnarray*}
 where
 \begin{equation*}
  \binom{k}{\mu} = \frac{k!}{{\mu}!},\;\;\;\;{\vec{a}}^{\mu}=
{a_1}^{\mu_1} \times {a_2}^{\mu_2} \times \cdots \times
{a_n}^{\mu_n};\;\;\;  k \in \N.
\end{equation*}
In general, in HFT power series are considered as ordered by homogeneous Clifford holomorphic polynomials (cf. \cite{guehasp08, ma90b}) which in terms of several hypercomplex variables can be written analogously to the real case as
$$
f(\vec{z}) = \sum_{k=0}^{\infty} \left( \sum_{{|\mu|}=k}{}c_{\mu}
\vec{z}^{\mu} \right)\;\;\mbox{resp.}\;\; f(\vec{z}) = \sum_{k=0}^{\infty} \left( \sum_{{|\mu|}=k}{} \vec{z}^{\mu}c_{\mu} \right).
$$
Obviously, if the coefficients $c_{\mu}$ are real, we must not distinguish between left and right Clifford holomorphic series or left and right Clifford holomorphic polynomials. In fact, the generalized sequences of Appell polynomials we are dealing with in the last section, are such polynomials of the form \begin{equation}\label{polynGen}
{\bf P}_k(z_1,\cdots,z_n)=\sum_{{|\mu|}=k}{}c_{\mu}\vec{z}^{\mu}, \;c_{\mu} \in \mathbb{R}\; .
\end{equation}
Their representation in terms of several hypercomplex variables is not the only one possible. Also representations with respect to $x, \bar{x} \in \A_n$ and the scalar part $x_0$ and the vector part $\xv $ of $x \in \A_n$ separately reveal interesting properties and will be the subject of the next section.

\subsection{Generalized Appell sequences of homogeneous Clifford holomorphic polynomials}

The monomials $x^k,\;k\geq 0, $ are the prototype of an Appell sequence of the real variable $x \in \R$, \cite{app80}. The difficulty to generalize them in the HFT context has to do with the fact that only in the complex case ($n=1$) the power function $f(x)= x^n,\;x \in \A_n,$ belongs to the set of Clifford holomorphic functions since $\overline{\partial}x^n=\frac{1}{2}(1-n),\, n \in \N.$ To overcome this problem, the first approaches to generalized sequences of Appell polynomials in HFT \cite{fama06, fama07}, motivated also by geometric features, lead to sequences with particular relevance, for instance, for complete orthogonal polynomial systems and similar problems, cf. \cite{bogue10,lav12}.

We use the classical definition of sequences of Appell polynomials in \cite{app80} adapted to the hypercomplex case like it was done in \cite{fama07}.
\begin{definition}\label{defhypApp}
A sequence of \textit{homogeneous Clifford holomorphic polynomials } $\left(\mathcal{F}_k\right)_{k\geq0}$ of degree $k$ is called a \textit{generalized Appell sequence with respect to} $\partial$ if $\mathcal{F}_0(x)\equiv 1,$ $\mathcal{F}_k$ is of exact degree $k$ ($k\geq0$) and $\partial\mathcal{F}_k=k\,\mathcal{F}_{k-1}, \ k=1,2,\ldots$.
\end{definition}

One possibility to determinate $\left(\mathcal{F}_k\right)_{k\geq0}$ in HFT, like in complex function theory, is the expansion of a Clifford holomorphic function into its Taylor series through a generalized Cauchy integral formula and the expansion of the corresponding Cauchy kernel \cite{guehasp08}. Analogously, this expansion leads to a higher order geometric series. The generalized Cauchy kernel $\frac{1-\bar{x}}{\left|1-x\right|^{n+1}}, \; x \in \mathcal{A}_n$, is right and left monogenic for $|x|<1$. Taking into account that the variables $x$ and $\bar{x}$ are commuting variables because of $x\bar{x}=\bar{x}x=|x|^2 \in \mathbb{R},$ we obtain
\begin{align}\label{CauchyKernelExp}
\frac{1-\bar{x}}{\left|1-x\right|^{n+1}}=&\frac{1-\bar{x}}{\left[(1-x)(1-\bar{x})\right]^{\frac{n+1}{2}}}=
\frac{1}{(1-x)^{\frac{n+1}{2}}}\frac{1}{(1-\bar{x})^{\frac{n-1}{2}}}\nonumber\\
=&\sum_{k=0}^{+\infty}\left(\sum_{s=0}^{k}\frac{\left(\frac{n+1}{2}\right)_{k-s}}{(k-s)!}\frac{\left(\frac{n-1}{2}\right)_{s}}{s!}\right)x^{k-s} \bar{x}^s,
\end{align}
where $(.)_k$ stands for the Pochhammer symbol defined by $(a)_s=\frac{\Gamma(a+s)}{\Gamma(a)}$ or $(a)_s=a(a+1)(a+2)\ldots(a+s-1),\, (a)_0 =1, \;\;s\geq 0.$

Taking
\begin{equation}\label{tks}
T_{s}^{k}(n):= \binom{k}{s}\frac{(\frac{n+1}{2})_{k-s}(\frac{n-1}{2})_{s}}{(n)_{k}}
\end{equation}
and
\begin{equation}\label{pkn2}
\Pkn(x):= \sum_{s=0}^{k}T_{s}^{k}(n)\,x^{k-s}\,{\bar x}^s,
\end{equation}
we can write (\ref{CauchyKernelExp}) in the form\footnote{The reader recognizes in the Cauchy product which constitute the reduced form of the generalized Cauchy kernel in $\R^{n+1}$, the Chu-Vandermonde convolution identity $(a+b)_{k} = \sum_{s=0}^k\binom{k}{s}(a)_{k-s}(b)_{s}$ since $(a+b)_{k}=(n)_{k.}$ }
\begin{align*}
\frac{1-\bar{x}}{\left|1-x\right|^{n+1}}=\sum_{k=0}^{+\infty}\frac{(n)_{k}}{k!}\,\Pkn(x).
\end{align*}

 The next step shows immediately the relation of (\ref{tks}) to Vietoris' number sequence (\ref{equ1a}) and its generalization. Considering $x_0=0$ in (\ref{pkn2}), it follows
\begin{align*}
\Pkn(\xv) =& \sum_{s=0}^{k}(-1)^s \,T_{s}^{k}(n)\,{\xv}^k
= c_k(n)\,{\xv}^k,
\end{align*}
where
\begin{equation}\label{genCoeff}
c_k(n):=\sum_{s=0}^{k}(-1)^s \,T_{s}^{k}(n).
\end{equation}
Moreover, in \cite{fama12a}, it was proved that the sequence $\left(c_k(n)\right)_{k\geq0}$ ($n \in \mathbb{N}$) defined by (\ref{genCoeff}) has the property
\begin{equation*}
c_{0}(n)=1, \,c_{2m}(n)=c_{2m-1}(n),\; \text{for}\; m\geq 1\;\text{and}\;n \in \mathbb{N},
\end{equation*}
and, therefore, constitutes a \textit{generalized Vietoris' number sequence}. In fact, for $n=2$, we get exactly the Vietoris' sequence (\ref{equ1a}).

Observing that
$x_0= (x+\bar{x})/2\;\;\mbox{and}\;\; \xv=(x-\bar{x})/2$, another representation of (\ref{pkn2}) can be derived.
This leads to
\begin{equation*}\label{pkn}
\P_{k}^n(x)={\sum_{s=0}^k\binom{k}{s}c_{s}(n)\,x_{0}^{k-s}\,\xv^{s}},
\end{equation*}
considered in detail in \cite{cafama12}, where the coefficients $c_s(n)$ ($s=0,\ldots,k$) are explicitly written as
\begin{equation}\label{ck}
c_s(n)=
\begin{cases}
\frac{s!!(n-2)!!}{(n+s-1)!!},&\text{\ if } s \text{\ is odd}\\[1ex]
c_{s-1}(n),&\hbox{\ if } s \text{\ is even}
\end{cases}.
\end{equation}
A different representation of the coefficients \eqref{genCoeff} (or \eqref{ck}) in terms of the Pochhammer symbol was introduced in \cite{cafama15}:
\begin{equation} \label{coeffPocha}
c_{2s}\left(n\right)=c_{2s-1}\left(n\right)=\frac{\left(\tfrac{1}{2}\right)_s}{ \left(\frac n2\right)_s}.
\end{equation}
From this representation it is even more clear that the number sequence $\left(c_k(n)\right)_{k\geq0}$ ($n \in \mathbb{N}$), whose elements satisfy (\ref{coeffPocha}) is a generalization of Vietoris' number sequence (\ref{equ1a}). Indeed, for $n=2$, we get easily
\begin{equation*}
c_{2k}\left(2\right)=c_{2k-1}\left(2\right)=\frac{\left(\tfrac{1}{2}\right)_k}{ k!}=
\frac{1}{2^{2k}}\binom{2k}{k}, \;\; k\geq 0.
\end{equation*}

\section{Generating functions and properties}

This section is devoted to the study of generating functions for generalized Vietoris' number sequences $\left(c_k(n)\right)_{k\geq0}$ ($n \in \mathbb{N}$). Despite the natural appearance of those real numbers in HFT, the methods to obtain their generating functions rely on simple real analysis. We start by observing that the representation (\ref{coeffPocha}) of its elements in terms of quotients of numbers represented by the Pochhammer symbol suggests the use of the well known Gauss' hypergeometric function. We recall that the Gauss' hypergeometric function is defined by
\begin{equation}\label{GeralHyperF}
\displaystyle {}_{2}F_{1}(a,b;c;z)= \sum_{k=0}^{+\infty} \frac{(a)_k\,(b)_k}{(c)_k} \frac{z^k}{k!}, \;\;|z|<1,
\end{equation}
where $a,b \in \mathbb{C}$, $c \in \mathbb{C}\setminus \left\{\mathbb{Z^{-}}\cup \{0\}\right\}$. For $z \in \mathbb{C}$ outside the circle of convergence, the hypergeometric function ${}_{2}F_{1}$ is defined by analytic continuation.

On the unit disc $|z|=1$, the series converges absolutely when $Re(c-a-b)>0$ and
\begin{equation}\label{HypFctSumm}
{}_{2}F_{1}(a,b;c;1)=\frac{\Gamma(c)\Gamma(c-a-b)}{\Gamma(c-a)\Gamma(c-b)},
\end{equation}
it converges conditionally for $-1<Re(c-a-b)\leq 0$ and if  $Re(c-a-b)\leq -1$ it diverges. Moreover, we use in the sequel also the following properties of (\ref{GeralHyperF}):
\begin{equation}\label{HypFctProp1}
{}_{2}F_{1}(a,b;b;z)=\left(1-z\right)^{-a}
\end{equation}
and
\begin{equation}\label{HypFctProp2}
{}_{2}F_{1}(a,a+\frac12;1+2a;z)=\left(\frac12+\frac12\sqrt{1-z}\right)^{-2a}.
\end{equation}

We are now ready to formulate the main result of this section.
\begin{teo}
Let $G(.,n)$ be the following real-valued function depending on a parameter $n \in \mathbb{N}$:
\begin{equation}\label{GenerFunct}
G(t;n)=
\begin{cases}
\frac{1}{t}\left[(1+t)\,{}_{2}F_{1}(\frac12,1;\frac n2;t^2)-1\right],&\text{\ if } t\in ]-1,0[\cup]0,1[\\[1ex]
1,&\hbox{\ if } t=0.
\end{cases}
\end{equation}
Then, for any fixed $n \in \mathbb{N}$, $G(.,n)$ is a one-parameter generation function of the sequence $\left(c_k(n)\right)_{k\geq0}$.
\end{teo}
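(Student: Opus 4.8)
The plan is to show that $G(\cdot\,;n)$ is the \emph{ordinary} generating function of $\left(c_k(n)\right)_{k\geq 0}$, i.e. that
\begin{equation*}
G(t;n)=\sum_{k=0}^{\infty}c_k(n)\,t^k,\qquad t\in\,]-1,1[\,.
\end{equation*}
The natural starting point is the closed form (\ref{coeffPocha}), which pairs the coefficients according to parity. First I would separate the series into its even- and odd-indexed parts, writing $k=2s$ ($s\geq 0$) and $k=2s-1$ ($s\geq 1$), and substitute $c_{2s}(n)=c_{2s-1}(n)=\frac{\left(\tfrac12\right)_s}{\left(\tfrac n2\right)_s}$ in both. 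This gives
\begin{equation*}
\sum_{k=0}^{\infty}c_k(n)\,t^k=\sum_{s=0}^{\infty}\frac{\left(\tfrac12\right)_s}{\left(\tfrac n2\right)_s}\,t^{2s}+\sum_{s=1}^{\infty}\frac{\left(\tfrac12\right)_s}{\left(\tfrac n2\right)_s}\,t^{2s-1}.
\end{equation*}

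The decisive observation is that $(1)_s=s!$, so multiplying and dividing the ratio $\left(\tfrac12\right)_s/\left(\tfrac n2\right)_s$ by $s!$ turns it into exactly the generic coefficient of (\ref{GeralHyperF}) with $a=\tfrac12$, $b=1$, $c=\tfrac n2$ and argument $z=t^2$. Hence the even part is recognized immediately as
\begin{equation*}
\sum_{s=0}^{\infty}\frac{\left(\tfrac12\right)_s}{\left(\tfrac n2\right)_s}\,t^{2s}={}_{2}F_{1}\!\left(\tfrac12,1;\tfrac n2;t^2\right),
\end{equation*}
while the odd part is the same series with one power of $t$ factored out and the $s=0$ term removed, i.e. $\frac1t\left[{}_{2}F_{1}\!\left(\tfrac12,1;\tfrac n2;t^2\right)-1\right]$. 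Adding the two pieces and collecting the common factor $\frac1t$ yields precisely the first branch of (\ref{GenerFunct}).

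It remains to justify the manipulations and the second branch. Since ${}_{2}F_{1}\!\left(\tfrac12,1;\tfrac n2;t^2\right)$ converges for $|t^2|<1$ (consistent with the fact that the ratio of Pochhammer symbols in (\ref{coeffPocha}) decays only polynomially in $s$, so the coefficient series has radius of convergence $1$), all rearrangements are legitimate on $]-1,0[\,\cup\,]0,1[$; the factor $\frac1t$ merely reflects the index shift of the odd part and is harmless away from the origin. For $t=0$ the series collapses to its constant term $c_0(n)=1$, which matches the value assigned in (\ref{GenerFunct}) and equals $\lim_{t\to 0}G(t;n)$, so the two branches glue continuously. I do not expect a genuine obstacle here: the only points requiring care are the bookkeeping of the parity split together with the $\frac1t$ factor, and checking that the removable singularity at $t=0$ is correctly resolved by the constant value $1$.
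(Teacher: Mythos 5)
Your proposal is correct and follows essentially the same route as the paper's proof: the same even/odd split exploiting $c_{2s}(n)=c_{2s-1}(n)$, the same identification of the even part as ${}_{2}F_{1}(\tfrac12,1;\tfrac n2;t^2)$ via $(1)_s=s!$, and the same resolution of the removable singularity at $t=0$ using ${}_{2}F_{1}(\tfrac12,1;\tfrac n2;0)=1$.
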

\begin{proof}
For each fixed $n \in \mathbb{N}$, consider the sequence $\left(c_k(n)\right)_{k\geq0}$ whose terms are defined by \eqref{coeffPocha}. The one-parameter generating function for this sequence can be written as the following formal power series in the real variable $t$
\begin{align}\label{generFct1}
G(t;n)=
\sum_{k=0}^{+\infty} c_{k}\left(n\right) \,t^{k}=&\sum_{s=0}^{+\infty} c_{2s}\left(n\right) \,t^{2s}+\sum_{s=1}^{+\infty} c_{2s-1}\left(n\right) \,t^{2s-1}\nonumber\\
=&\frac{1+t}{t}
\sum_{s=0}^{+\infty} c_{2s}\left(n\right) \,t^{2s}-\frac1t,
\end{align}
for $t\neq0$, because $c_{2s}\left(n\right)=c_{2s-1}\left(n\right)$ ($s=1,2,\ldots$).

Taking $a=\frac12$, $b=1$, $c=\frac n2$ ($n \in \mathbb{N}$) and $z=t^2$ ($t \in \left]-1,1\right[\setminus\{0\}$) in \eqref{GeralHyperF}, we obtain for \eqref{generFct1},
\begin{equation*}
G(t;n)=\frac{1+t}{t}
{}_{2}F_{1}(\frac12,1;\frac n2;t^2)-\frac1t.
\end{equation*}

Since ${}_{2}F_{1}(\frac12,1;\frac n2;0)=1$, the function (\ref{GenerFunct}) is well-defined for $|t|<1$.
\end{proof}

It is clear that we can obtain a closed formula for the generating function $G(.;n)$ of the sequence $\left(c_k(n)\right)_{k\geq0}$ ($n \in \mathbb{N}$) as long as it is known a closed formula for the corresponding hypergeometric series. As examples we list some cases where such closed formulae are well known and, consequently, closed formulae for $G(.;n)$ can be easily obtained.

\subsection{Examples}
\begin{enumerate}
\item $n=1$; in this case, $c_k(1)=1$ ($k\geq0$) and the corresponding generating function is given by
\begin{align*}
G(t;1)=\frac{1}{t}\left[(1+t)\,{}_{2}F_{1}(\frac12,1;\frac 12;t^2)-1\right]=\frac{1}{1-t},
\end{align*}
because ${}_{2}F_{1}(\frac12,1;\frac 12;t^2)$ reduces to the geometric function.

Notice that the case $n=1$ correspond to the complex case in HFT.

\item $n=2$; in this case, $c_{2k}(2)=c_{2k-1}(2)=\frac{\left(\frac12\right)_k}{k!}$($k\geq0$) and by using \eqref{HypFctProp1} the corresponding generating function is obtained as
\begin{align*}
G(t;2)=\frac{1}{t}\left[(1+t)\,{}_{2}F_{1}(\frac12,1;1;t^2)-1\right]=\frac{\sqrt{1+t}-\sqrt{1-t}}{t \sqrt{1-t}}.
\end{align*}

It is worth to notice that this case corresponds to Vietoris' number sequence (\ref{equ1a}). Moreover, we remark that this real-valued function also generates the sequence (\ref{pattern}), defined in terms of quaternion units.

\item $n=3$; $c_{2k}(3)=c_{2k-1}(3)=\frac{\left(\frac12\right)_k}{\left(\frac32\right)_k}$ ($k\geq0$) and the corresponding generating function is given by
\begin{align*}
G(t;3)=&\frac{1}{t}\left[(1+t)\,{}_{2}F_{1}(\frac12,1;\frac32;t^2)-1\right]=\frac 1t\left(\frac{t+1}{t}\ln{\sqrt{\frac{1+t}{1-t}}}-1\right).
\end{align*}
Here we computed a closed formula  for the function ${}_{2}F_{1}(\frac12,1;\frac32;t^2)$ by observing that $\frac{\left(\frac12\right)_k}{\left(\frac32\right)_k}=\frac{1}{2k+1}$ and using integration.

\item $n=4$; $c_{2k}(4)=c_{2k-1}(4)=\frac{\left(\frac12\right)_k}{(k+1)!}$ ($k\geq0$) and by using \eqref{HypFctProp2} the corresponding generating function leads to
\begin{align*}
G(t;4)=\frac{1}{t}\left[(1+t)\,{}_{2}F_{1}(\frac12,1;2;t^2)-1\right]=\frac{2t+1-\sqrt{1-t^2}}{t (1+\sqrt{1-t^2})}.
\end{align*}
\end{enumerate}

\subsection{Series involving generalized Vietoris' number sequences}
Using the one-parameter generating function $G(.;n)$ ($n \in \mathbb{N}$), we can now study the convergence of some non-trivial series that involve generalized Vietoris' number sequences.

As a consequence of the properties of the Gauss' hypergeometric function, $G(.;n)$ is well-defined for $t=\pm1$  if  $n>3$.

For $t=1$, $G(1;n)$ corresponds to the series whose general term is $c_k(n)$ ($k \geq 0$ and $n \in \mathbb{N}$). Using (\ref{HypFctSumm}), its sum is equal to
\begin{align*}
\sum_{k=0}^{+\infty} c_{k}\left(n\right)=\frac{n-1}{n-3},\;\text{for}\, n>3.
\end{align*}

The case $n=1$ leads clearly to a divergent series as consequence of the properties of the Gauss' hypergeometric function. For $n=2$, the lower bound $\binom{2k}{k}\geq\frac{2^{2k}}{2k+1}$ ensures that the corresponding series is divergent. The case $n=3$ leads to the series of reciprocal odd numbers, which is clearly divergent.

On the other hand $G(-1;n)$ is also well-defined for $n=2$ and $n=3$ because it leads to the series
\begin{equation*}
\sum_{k=0}^{+\infty} (-1)^k c_{k}\left(n\right)
\end{equation*}
that is convergent by Leibniz' test, for $n>1$. Indeed, taking into account formula (\ref{coeffPocha}), it is clear that the sequence $\left(c_{k}(n)\right)_{k\geq 0}$ is decreasing to zero, except for $n=1$, where the corresponding series $\sum_{k=0}^{+\infty} (-1)^k $ is divergent.

\section{Representation of the generalized Vietoris' number sequences by Clifford-algebra generators}
Now we are able to deduce the formula that generalizes (\ref{pattern}) and expresses the generalized Vietoris' number sequence (\ref{coeffPocha}) for an arbitrary value $n$ only by the generators $e_1, e_2, \dots e_n$ of the Clifford algebra $\Cl$. Therefore, we consider the polynomials $\Pkn$ in the form (\ref{pkn2}), for $x=1$. This yields to
\begin{equation}\label{tsksum}
   \Pkn(1) = \sum_{s=0}^kT_s^k(n)=\sum_{s=0}^k\frac{k!}{(n)_{k}}\frac{(\frac{n+1}{2})_{k-s}(\frac{n-1}{2})_{s}}{(k-s)!
s!}=1,
\end{equation}
where (\ref{tsksum}) follows from Chu-Vandermonde's identity for the Pochhammer symbol to the sum of all of $T_s^k(n)$ for fixed $k$ and $n$ (see footnote 3).

\begin{teo}
For each fixed $n \in \mathbb{N}$,  the elements of the generalized Vietoris' number sequence $\left(c_k(n)\right)_{k\geq0}$ admit the representation
\begin{equation} \label{cknn}
c_{k}(n)=(-1)^k \left[\sum_{|\nu|=k}
\binom{k}{\nu}\left(e_1^{\nu_1}\times e_2^{\nu_2}\times \cdots
\times e_n^{\nu_n}\right)^2\right]^{-1}.
\end{equation}
\end{teo}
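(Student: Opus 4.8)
The plan is to evaluate the bracketed sum in \eqref{cknn} by recognizing it as the result of applying the $k$-th power of the Dirac operator $\partial_{\xv}$ to $\xv^k$, and then to compute that quantity explicitly by iteration. Throughout I write $\mathbf{e}^{\mu}:=e_1^{\mu_1}\times\cdots\times e_n^{\mu_n}$ and $\partial^{\nu}:=\prod_i(\partial/\partial x_i)^{\nu_i}$.

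First I would record two instances of the polynomial formula from Section 2. Applying it with $a_i=x_ie_i$, so that the real scalars $x_i$ factor out of each symmetric product, gives $\xv^k=\sum_{|\mu|=k}\binom{k}{\mu}x^{\mu}\mathbf{e}^{\mu}$, with $x^{\mu}=x_1^{\mu_1}\cdots x_n^{\mu_n}$. Applying it instead in the operator ring with $a_i=e_i\tfrac{\partial}{\partial x_i}$, and using that the commuting operators $\tfrac{\partial}{\partial x_i}$ pass through the constants $e_j$, gives $(\partial_{\xv})^k=\sum_{|\nu|=k}\binom{k}{\nu}\mathbf{e}^{\nu}\partial^{\nu}$. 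Composing the two and invoking the elementary relation $\partial^{\nu}x^{\mu}=\nu!\,\delta_{\mu\nu}$ valid for $|\mu|=|\nu|=k$, all cross terms drop and
\[
(\partial_{\xv})^k\,\xv^k=\sum_{|\nu|=k}\binom{k}{\nu}^2\nu!\,(\mathbf{e}^{\nu})^2=k!\sum_{|\nu|=k}\binom{k}{\nu}(\mathbf{e}^{\nu})^2 .
\]
Hence the bracket in \eqref{cknn} equals $\tfrac1{k!}(\partial_{\xv})^k\xv^k$; in particular it is automatically a real scalar, so that its inverse is meaningful.

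The heart of the argument is then the computation of $(\partial_{\xv})^k\xv^k$ by iteration. Using $\xv^2=-|\xv|^2$ and the product rule for the first-order operator $\partial_{\xv}$, I would establish the reduction $\partial_{\xv}\,\xv^{j}=-\lambda_j\,\xv^{j-1}$, where $\lambda_j=j$ for $j$ even and $\lambda_j=j+n-1$ for $j$ odd; the parity split arises because $\partial_{\xv}$ acts differently on the scalar $\xv^{2m}$ and on the vector $\xv^{2m+1}$, the extra summand $\sum_i e_i^2=-n$ contributing only in the odd case. Iterating $k$ times yields $(\partial_{\xv})^k\xv^k=(-1)^k\prod_{j=1}^{k}\lambda_j$, so the bracket equals $\frac{(-1)^k}{k!}\prod_{j=1}^{k}\lambda_j$ and \eqref{cknn} reduces to the purely arithmetic claim $c_k(n)=\frac{k!}{\prod_{j=1}^k\lambda_j}$.

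Finally I would verify this last identity against the closed form \eqref{coeffPocha}. Separating $\prod_{j=1}^{k}\lambda_j$ into its even factors $2,4,\dots$ and odd factors $n,(n+2),(n+4),\dots$, and using $(\tfrac12)_s=\frac{(2s)!}{4^s s!}$, a short computation gives $\frac{k!}{\prod_{j=1}^{k}\lambda_j}=\frac{(\tfrac12)_s}{(\tfrac n2)_s}$ for both $k=2s$ and $k=2s-1$, matching $c_{2s}(n)=c_{2s-1}(n)$ in \eqref{coeffPocha}. Combining the steps, $\sum_{|\nu|=k}\binom{k}{\nu}(\mathbf{e}^{\nu})^2=\frac{(-1)^k}{k!}\prod_{j}\lambda_j=(-1)^k/c_k(n)$, which is \eqref{cknn}. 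I expect the main obstacle to be exactly the Dirac-operator recursion $\partial_{\xv}\xv^{j}=-\lambda_j\xv^{j-1}$: the even/odd bookkeeping is what injects the shift $n-1$ into the odd powers and therefore produces the pairwise structure $c_{2s}=c_{2s-1}$ characteristic of the Vietoris sequence, so the non-commuting product rule there must be handled carefully. For $n=2$ this recovers \eqref{pattern}, settling the claim announced in the introduction.
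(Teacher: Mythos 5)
Your proof is correct, but it takes a genuinely different route from the paper's. The paper's own argument is essentially a two-line evaluation: it quotes from \cite{mafa10} the explicit representation $\Pkn(z_1,\dots,z_n)=c_k(n)\sum_{|\nu|=k}\binom{k}{\nu}\,z_1^{\nu_1}\times\cdots\times z_n^{\nu_n}\cdot e_1^{\nu_1}\times\cdots\times e_n^{\nu_n}$ in the hypercomplex variables $z_j=x_j-x_0e_j$, specializes to $x_0=1$, $x_j=0$ (so $z_j=-e_j$), and uses the normalization $\Pkn(1)=1$ from the Chu--Vandermonde identity \eqref{tsksum} to read off \eqref{cknn}. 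You instead identify the bracketed sum with $\tfrac1{k!}(\partial_{\xv})^k\xv^k$ by applying the polynomial formula separately to $\xv$ and to $\partial_{\xv}$ and using $\partial^{\nu}x^{\mu}=\nu!\,\delta_{\mu\nu}$, and then evaluate $(\partial_{\xv})^k\xv^k$ through the parity-split recursion $\partial_{\xv}\,\xv^{j}=-\lambda_j\xv^{j-1}$ with $\lambda_{2m}=2m$, $\lambda_{2m+1}=2m+n$, finally matching $k!/\prod_j\lambda_j$ against the Pochhammer form \eqref{coeffPocha}; I checked the recursion and the arithmetic (via $(\tfrac12)_s=(2s)!/(4^s s!)$) and both are right. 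The paper's route buys brevity at the cost of importing the nontrivial external fact that all multi-index coefficients of $\Pkn$ coincide with $c_k(n)$; your route buys self-containedness --- you need neither the Appell property, nor the Cauchy-kernel expansion, nor the cited representation of $\Pkn$, only \eqref{coeffPocha} and elementary Clifford calculus --- and it makes visible the structural origin of the pairwise repetition $c_{2s}(n)=c_{2s-1}(n)$, namely that the shift $n-1$ enters only through the odd factors $\lambda_{2m+1}$.
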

\begin{proof}
Using the general form (\ref{polynGen}) of a homogenous Clifford holomorphic polynomial of degree $k$, the exact expression of the Appell polynomial (\ref{pkn2}) is (cf. \cite{mafa10})
\begin{equation*} \label{pks}
\Pkn(z_1,\cdots,z_n)= c_k(n)
\sum_{|\nu|=k} \binom{k}{\nu} z_1^{\nu_1}\times\cdots\times z_n^{\nu_n}\cdot
 e_1^{\nu_1}\times\cdots\times e_n^{\nu_n}.
\end{equation*}
Setting $x_0 =1$ and $x_k=0$ in $z_k=x_k-x_0e_k$ ($k=1,\ldots,n$) from (\ref{tsksum}) it follows
\begin{equation*} \label{pks1}
1= \Pkn(-e_1,-e_2,\ldots,-e_n)= c_k(n)
\sum_{|\nu|=k} \binom{k}{\nu} (e_1^{\nu_1}\times\cdots\times e_n^{\nu_n})^2
\end{equation*}
and we obtain (\ref{cknn}).
\end{proof}

In \cite{mafa10} the reader can also find some relations of the generalized Vietoris' sequence (\ref{coeffPocha}) with special values of Bessel functions and Legendre polynomials.

\section{Conclusion}
Together with \cite{rusa04} the results show that Vietoris' sequence of rational numbers combines seemingly disperse subjects in Real, Complex and Hypercomplex Analysis. They also show that a non-standard application of Clifford algebra tools was able to reveal these new insights in objects of combinatorial nature.

\section*{Acknowledgements}

 This work was supported by Portuguese funds through the CIDMA-Center for Research and Development in Mathematics and Applications, and the Portuguese Foundation for Science and Technology (``FCT-Funda\c c\~ao para a Ci\^encia e Tecnologia"), within project UID/MAT/04106/2013.

\end{document}